\documentclass{article}

\usepackage{fullpage}
\usepackage{latexsym}
\usepackage{graphicx}
\usepackage{mathptmx}
\usepackage{authblk}

\usepackage[style=authoryear, backend=biber]{biblatex}

\addbibresource{reference.bib}

\usepackage{amsmath}
\usepackage{amsfonts}
\usepackage{amssymb}
\usepackage{amsbsy}
\usepackage{amsthm}
\usepackage{bm}
\usepackage{multirow}

\newcommand{\citeN}[1]{\textcite{#1}}
\renewcommand{\cite}[1]{\parencite{#1}}

\newtheorem{theorem}{Theorem}

\newtheorem{remark}{Remark}

\newcommand{\EE}{ {\mathbb{E } } }

\newcommand{\target}{ {k } }

\usepackage[ruled,vlined]{algorithm2e}

\begin{document}

\title{Tail Quantile Estimation for Non-preemptive Priority Queues }

\author[12]{Jin Guang}
\author[2]{Guiyu Hong}
\author[2]{Xinyun Chen}
\author[3]{Xi Peng}
\author[3]{Li Chen}
\author[3]{Bo Bai}
\author[3]{Gong Zhang}
\affil[1]{Shenzhen Research Institute of Big Data, Guangdong 518172, P. R. CHINA}
\affil[2]{School of Data Science, The Chinese University of Hong Kong, Shenzhen, Guangdong 518172, P. R. CHINA}
\affil[3]{Theory Lab, Central Research Institute, 2012 Labs, Huawei Technologies, Co., Ltd., Hong Kong, P. R. CHINA}

\date{}

\maketitle

\begin{abstract}
    Motivated by applications in computing and telecommunication systems, we investigate the problem of estimating $p$-quantile of steady-state sojourn times in  a single-server multi-class queueing system with non-preemptive priorities for $p$ close to 1. The main challenge in this problem lies in efficient sampling from the tail event. To address this issue, we develop  a regenerative simulation algorithm with importance sampling.  In addition, we establish a central limit theorem for the estimator to construct the confidence interval. Numerical experiments show that our algorithm outperforms benchmark simulation methods. Our result contributes to the literature on rare event simulation for queueing systems. 
\end{abstract}

\section{INTRODUCTION}\label{sec:intro}
In computing and telecommunication industries, service-level agreement (SLA)  usually takes the form of a guarantee on the tail probability of response times,
$$P\left(\text{response time}>\gamma\right)<1- p.$$
For example, $p=0.99$ and $\gamma = 400$ms  \cite{harchol2021open}. In practice, the tail probability $1-p$ is usually fixed, and  the service provider needs a good estimator of $p$-quantile of response time before drafting an SLA contract. A computing or telecommunication system usually involves multiple priority classes and protocols, for which an analytic solution of the response time distribution is unavailable.  In this paper, we investigate simulation techniques to estimate the $p$-quantile of steady-state response time for a given $p\approx 1$.

In particular, we consider a single node with priority queueing (PQ) scheduling as in Huawei CloudEngine 12800 and 12800E \cite{huawei}. Mathematically, it can be modeled  as a non-preemptive single server with  multiple priority classes. Suppose $F$ is the distribution function of steady-state response time. Then the $p$-quantile of $F$, $0<p<1$, is defined to be $Q(p)=\inf\{x:F(x)\ge p\}$.  We develop a simulation algorithm that efficiently estimates the quantile $Q(p)$ for each priority class. To estimate the steady-state distribution and facilitate output analysis, we apply the regenerative simulation framework \cite{asmussen2007stochastic}. A good estimator for the quantile $Q(p)$ essentially relies on sufficient samples from the rare event $\{\text{response time}\approx Q(p)\}$ for $p\approx 1$. For this purpose, we apply importance sampling (IS) method that is widely used in rare event simulation and apply the cross-entropy (CE) method \cite{de2004fast} to optimize the IS distribution. {Based on the regenerative cycles generated under the importance distribution, we design a new quantile estimator and establish the corresponding central limit theorem.  As a consequence, we can construct confidence intervals for the quantile using batching methods.} The numerical results show that our algorithm outperforms  benchmark algorithms. Finally, we apply our algorithm to SLA estimation for a node with 8 priority classes.

Our work is related to the literature on rare event simulation of queueing systems, see \citeN{blanchet2009rare} and the references therein. Most existing works are on single-class systems, including G/G/1 queue \cite{Blanchet2007GG1}, fluid network \cite{chang1994effective} and Jackson network \cite{dupuis2009importance}.  There are a few exceptions on processing sharing system \cite{mandjes2006large} and priority queues \cite{setayeshgar2012large}. In those works, the IS distributions are typically derived via large deviation principle (LDP),  see for example \citeN{parekh1989quick} or \citeN{dupuis2007dynamic}. Among these works, the setting in  \citeN{setayeshgar2012large} is probably the closest to ours. In \citeN{setayeshgar2012large}, the author studies IS for a  preemptive server with 2 priority classes based on LDP analysis. To the best of our knowledge, however, there is no existing result on non-preemptive priority queues. This is probably because both LDP and differential game approaches rely on sophisticated analysis on the system dynamics and hence are hard to be applied to systems with multiple customer classes and general service protocols. In this light, we apply the CE method to  optimize the IS distribution numerically. In our numerical experiments, we show that preemptive and non-preemptive queues behave differently conditional on the rare event. As a consequence, our algorithm outperforms the IS algorithm proposed in \citeN{setayeshgar2012large}, which was designed for preemptive systems. We also note that most existing works {in the area of rare event simulation} have focused on the distribution of total workload in the system, while our algorithm works for the sojourn time of individual customers. 

\textbf{Organization of the rest of this paper:} The queueing model and the simulation goal are explained in Section \ref{sec: model}. In Section \ref{sec: algorithm}, we first explain the key components in algorithm design and then present the complete algorithm. In Section \ref{sec: CI}, we establish CLT for our quantile estimator  and construct the confidence intervals. Numerical results  are reported  in Section \ref{sec: numerical}.

\section{PROBLEM SETTING}\label{sec: model}

We consider a single server with jobs of multiple priority classes. Let $K$ be the number of priority classes. We index the classes by $k=1,2,...,K$, with class $1$ having the highest priority and class $K$ the lowest priority. For $k=1,2,...,K$, jobs of class $k$ arrive according to a Poisson process with the rate $\lambda_k$ and have i.i.d. exponential service times with mean $1/\mu_k$. Following the setting in real systems, we assume that the server is \textbf{non-preemptive}, i.e.,
low priority jobs in service are not interrupted by high priority jobs. 
The arrival processes and service times are assumed to be mutually independent. 

Our goal is to estimate a commonly used performance metric in SLA \cite{harchol2021open} for the above non-preemptive system via simulation. In detail, for  a given $p\in(0,1)$ and each class $k$, let $Q_k(p)$ be the  $p$-tile of total sojourn time, also known as response time, $R_{k,\infty}$  of class-$k$ jobs in the \textbf{steady state}, i.e.,
\begin{equation}\label{eq:quantile}
    Q_k(p)=\inf\{\gamma:P(R_{k,\infty}\le \gamma)\ge p\}=\inf\{\gamma:P(R_{k,\infty}>\gamma)<1-p\}.
\end{equation}
For a given $p\in(0,1)$, we need to estimate $Q_k(p)$ for all $k=1,2,...,K$. In the setting of SLA, $p$ is typically close to 1, say $p=0.999$ or 0.99999. 
In other words, the problem is essentially to estimate the quantile corresponding to an extreme event in the steady-state distribution of a queueing system.

\section{ALGORITHM DESIGN}\label{sec: algorithm}
For  a given priority class $k$, Let $F(\gamma) = P(R_{k,\infty}\leq \gamma)$ be the CDF of steady-state response time. 
Intuitively, we can obtain a good estimation of $Q(p)$ if we have a good estimation of $F(\gamma)$ for $\gamma$ in a neighborhood around $Q(p)$. Our algorithm design is based on this idea and contains three key components. First, to deal with the steady-state distribution, we apply the regenerative simulation technique (Section \ref{subsec: regenerative}). As $\{R_\infty>\gamma\}$ are rare event for $\gamma$ close to $Q(p)$, we use IS to improve simulation efficiency (Section \ref{subsec: IS}). The IS distribution is optimized by cross-entropy method (Section \ref{subsec: CE}). Finally, we present our estimator for $Q(p)$  in Section \ref{subsec: complete algorithm} along with the complete algorithm.

\subsection{Regenerative Simulation}\label{subsec: regenerative}

The dynamics of the queueing model described in Section \ref{sec: model} can be viewed as a regenerative process. In particular, the system  regenerates whenever a job finds the server idle upon departure.
For a regenerative cycle, denote by $\alpha$ the cycle length in unit of time and $\alpha_k$ the total number of jobs of class $k$ served in this cycle. For $n=1,2,...,\alpha_k$,
let $R_{k,n}$ be the response time  of the $n$-th job of class $k$. 
Then by renewal reward theorem \cite{crane1977introduction}, we have, for any priority class $k$ and $\gamma>0$
\begin{equation}\label{eq:RM}
	P\left(R_{k,\infty}>\gamma\right)
	=\frac{\EE\left[\sum_{n=1}^{\alpha_k} 1_{\{R_{k,n}>\gamma\}}\right]}{\EE\left[\alpha_k\right]}.
\end{equation}

The enumerator $\EE\left[\sum_{n=1}^{\alpha_k} 1_{\{R_{k,n}>\gamma\}}\right]$ involves a rare event when $\gamma$ is large and hence will be estimated by IS method which we explain in the Section \ref{subsec: IS}. But to estimate the denominator $\EE[\alpha_k]$ does not necessarily requires importance sampling. 

\subsection{Importance Sampling for Tail Probability Estimation}\label{subsec: IS}
Intuitively, in order to get good estimators for $Q_k(p)$,  we need to choose the importance distribution such that the events $\{R_{k,n}>\gamma\}$ happens frequently for $\gamma\approx Q(p)$. Suppose $Q(p)<\gamma_{\max}$ and the constant $\gamma_{\max}$ is known. Then, we shall design the importance distribution such that event $\{R_{k,n}>\gamma_{\max}\}$ happens frequently and hence so are $\{R_{k,n}>\gamma\}$ for $\gamma\approx Q_k(p)$.

The queueing system is driven by the jobs' inter-arrival times and services times. Denote by $f^A_l(\cdot)$ and $f^S_l(\cdot)$ be the density of inter-arrival times and service times of class-$l$ jobs, for $l=1,2,...,K$. 
Our design of IS simulation follows the so-called switching change of measure \cite{rubinstein2004cross}. In particular, starting from time $0$, we generate the inter-arrival times and service times according to some  IS density $\tilde{f}^A_l$ and $\tilde{f}^S_l$, $l=1,2,...,K$ and simulate the system dynamics accordingly. The choice of $\tilde{f}^A_l$ and $\tilde{f}^S_l$ will be discussed in Section \ref{subsec: CE}. So for, let's take $\tilde{f}^A_l$ and $\tilde{f}^S_l$ as given.  In switching change of measure, the simulator will switch back to the original distribution  $f^A_l$ and $f^S_l$  once the event of interest $\{R_{k,n}>\gamma_{\max}\}$ is observed. In principle, the event $\{R_{k,n}>\gamma_{\max}\}$ is observed only after job $n$ departs. To  further reduce the variance caused by the random likelihood ratio, we shall define a more sophisticated switching rule as follows.

Suppose for jobs of classes $l=1,2,..., k$, their service times are realized upon arrival. At the arrival time of  job $n$ in class $k$, define $R'_{k,n}$ as the sum of service times of all jobs of class $1\leq l\leq k$ waiting in queues and the remaining service time of the job in service. Note that  $R'_{k,n}$ is a lower bound of the actual response time $R_{k,n}$ and is known immediately upon arrival. Define $\tau_k$ as the arrival time of the first class-$k$ job such that $R^{\prime}_{k,n}>\gamma_{\max}$. 
\begin{equation*}
	\tau_k= \inf \left\{ \sum_{i=1}^nA_{k,i}: R^{\prime}_{k,n}>\gamma_{\max} \right\}.
\end{equation*}
By definition, $\tau_k$ is a stopping time with respect to filtration $\{\mathcal{F}_t:t\geq 0\}$, where $\mathcal{F}_t$ includes events corresponding to all arrivals before time $t$, service times of jobs in classes $l\leq k$ that have arrived by time $t$, and service times of jobs in classes $l>k$ that have entered service by time $t$. In our algorithm, the simulator will switch back to the original distribution after $\tau_k\wedge \alpha$ and terminate at $\alpha$ when a job finds the server idle upon departure. 

Now we explain how to compute the likelihood ratio function for a sample path of the regenerative cycle generated by our IS algorithm.  For each class $l=1,2,...,K$, denote by $A_{l,n}$ the epoch between the $(n-1)$-th and $n$-th arrivals of class $l$, and denote by $S_{l,n}$ the service time of $n$-th arrival of class $l$. Let $G^A_l(\cdot)$ be the tail probability of inter-arrival times in class $l$. 
For any time $t\in[0,\alpha]$, denote by $N_l^A(t)$ and $N_l^S(t)$  the number of job arrivals  and  jobs that have entered service of class $l$ by time $t$, respectively. Then, the likelihood ratio upon time $t$ can be computed as
\begin{equation}\label{eq: LR}
	L(t) = \frac{\prod_{l=1}^K\left(\prod_{n=1}^{N^A_l(t\wedge \tau_k)}f^A_l(A_{l,n})\cdot G^A_l(H^A_l)\right)\cdot \prod_{l=1}^k\prod_{n=1}^{N^A_l(t\wedge \tau_k)}f^S_{l}(S_{l,n})\cdot\prod_{l=k+1}^K\prod_{n=1}^{N_l^S(t\wedge \tau_k)}f^S_{l}(S_{l,n}) }{\prod_{l=1}^K\left(\prod_{n=1}^{N_l^A(t\wedge \tau_k)}\tilde{f}^A_l(A_{l,n})\cdot \tilde{G}^A_l(H^A_l)\right)\cdot \prod_{l=1}^k\prod_{n=1}^{N_l^A(t\wedge \tau_k)}\tilde{f}^S_{l}(S_{l,n})\cdot\prod_{l=k+1}^K\prod_{n=1}^{N_l^S(t\wedge \tau_k)}\tilde{f}^S_{l}(S_{l,n})},
\end{equation}
where $H^A_l$ is the age since last arrival of class $l$ at time $t\wedge \tau_k$. Note that the above formula of likelihood still holds if $t$ is replaced with a stopping time with respect to the filtration $\{\mathcal{F}_t\}$, see Chapter 5.1c of \citeN{asmussen2007stochastic}.

Let $E_{k,n}$ be the enter-service time of the $n$-th job in class $k$. Then, $R_{k,n}\in\mathcal{F}_{E_{k,n}}$. As a consequence, for any $\gamma>0$,  we have
$$\EE\left[\sum_{n=1}^{\alpha_k}1_{\{R_{k,n}>\gamma\}}\right] = \tilde{\EE}\left[\sum_{n=1}^{\alpha_k}1_{\{R_{k,n}>\gamma\}}L(E_{k,n})\right],$$
where $L(\cdot)$ is the likelihood function defined in \eqref{eq: LR}.

Suppose we have independently generated $m_1$ regenerative cycles via IS simulation and $m_2$ cycles via naive simulation. 
We use superscript $i$ to denote the $i$-th  cycle and ``$\tilde{~~}$" to denote  samples generated by IS simulation.  
Our IS estimator for tail probability of the steady-state response time is given by 
\begin{equation}\label{eq:tail_estimator}
	\hat{P}_{m_1,m_2}\left(R_{k,\infty}>\gamma\right)
	=\frac{m_1^{-1}\sum_{i=1}^{m_1}\sum_{n=1}^{\tilde{\alpha}_k^i} L(\tilde{E}_{k,n}^i) 1_{\{\tilde{R}_{k,n}^i>\gamma\}}}{m_2^{-1}\sum_{i=1}^{m_2}\alpha^i_k}.
\end{equation}

\subsection{Cross-Entropy Method for Searching Importance Measure}\label{subsec: CE}
Theoretically, the optimal IS distribution \cite{asmussen2007stochastic} for the numerator $\EE\left[\sum_{n=1}^{\alpha_k} 1_{\{R_{k,n}>\gamma\}}\right]$ is 
$$\frac{\sum_{n=1}^{\alpha_k} 1_{\{R_{k,n}>\gamma \}} \cdot h(\alpha)}{ \EE\left[\sum_{n=1}^{\alpha_k} 1_{\{R_{k,n}>\gamma\}}\right]},$$
where $h(\alpha)$ is the density of a regenerative cycle under the original distribution. Note that the optimal IS distribution is not  practical as it involves the unknown constant $\EE\left[\sum_{n=1}^{\alpha_k} 1_{\{R_{k,n}>\gamma\}}\right]$. Still,  if we can choose a distribution close to the optimal IS distribution, it is likely that using it as IS distribution could efficiently  reduce the variance. 
Based on this idea,  cross-entropy method \cite{rubinstein2004cross} uses KL divergence to measure the distance between two distributions and chooses IS distribution by solving the following optimization problem:
\begin{equation}\label{eq:CE}
	\max_{\bm{f}'} \EE\left[   \sum_{n=1}^{\alpha_k} 1_{\{R_{k,n}>\gamma\}}  \cdot  \log L'(\alpha)   \right],
\end{equation}
where $\bm{f}'=({f'}^{A}_{1}, \ldots, {f'}^{A}_{K}, {f'}^{S}_{1}, \ldots, {f'}^{S}_{K})$  are the densities of the inter-arrival and service times under the IS distributions, and  $L'(t)$ is the likelihood ratio with respect to the IS distribution $\bm{f}'$ as defined in \eqref{eq: LR}.

The objective function \eqref{eq:CE} can be estimated by  simulation data, and we can use the IS distribution $\tilde{f}$ for variance reduction.  Suppose we use $N$ simulated regenerative cycles to estimate the objective function, \eqref{eq:CE} is  approximated by

\begin{equation}\label{eq:CEISsample}
	\max_{\bm{f}'}\frac{1}{N}\sum_{i=1}^{N}   \left( \sum_{n=1}^{\tilde{\alpha}_k^i}1_{\{\tilde{R}_{k,n}^i>\gamma\}}L(\tilde{E}_{k,n}^i) \cdot \log L'(\tilde{\alpha}^i) \right).
\end{equation}
For computational efficiency,  $N$ should be much smaller than the number of cycles $m_1$ of \eqref{eq:tail_estimator}. In our setting, we set $\bm{f}'$ as exponential distributions with rates $(\lambda^{\prime}_1,\ldots, \lambda^{\prime}_K,  \mu^{\prime}_1,\ldots, \mu^{\prime}_K)$, which belong to the same parametric family as the original distributions.  Then, problem \eqref{eq:CEISsample} has a closed-form solution:
\begin{equation}\label{eq:lambda}
	{\lambda}_{l}^{\prime} = \frac{ \sum_{i=1}^{M}  \left( \sum_{n=1}^{\tilde{\alpha}_k^i}1_{\{\tilde{R}_{k,n}^i>\gamma\}}L(\tilde{E}_{k,n}^i)  \cdot    {\tilde N_l^{A,i}(\tilde\tau_k^i \wedge \tilde\alpha^i)}  \right) }{ \sum_{i=1}^{M}  \left( \sum_{n=1}^{\tilde{\alpha}_k^i}1_{\{\tilde{R}_{k,n}^i>\gamma\}}L(\tilde{E}_{k,n}^i) \right) \left(\sum_{n=1}^{\tilde N_l^{A,i}(\tilde \tau_k^i \wedge \tilde \alpha^i)} \tilde{A}_{l,n}^i + \tilde{H}_l^{A,i} \right) }, \quad l=1,\ldots, K
\end{equation}
and 
\begin{equation}\label{eq:mu}
	{\mu}_{l}^{\prime} = \begin{cases}
		\frac{ \sum_{i=1}^{M}  \left( \sum_{n=1}^{\tilde{\alpha}_k^i}1_{\{\tilde{R}_{k,n}^i>\gamma\}}L(\tilde{E}_{k,n}^i) \right)     {\tilde N_l^{A,i}(\tilde\tau_k^i \wedge \tilde\alpha^i)}   }{ \sum_{i=1}^{M}  \left( \sum_{n=1}^{\tilde{\alpha}_k^i}1_{\{\tilde{R}_{k,n}^i>\gamma\}}L(\tilde{E}_{k,n}^i) \right) \left(\sum_{n=1}^{\tilde N_l^{A,i}(\tilde\tau_k^i \wedge \tilde\alpha^i)} \tilde{S}_{l,n}^i \right) },& l=1,\ldots, k,\\
		\frac{ \sum_{i=1}^{M}  \left( \sum_{n=1}^{\tilde{\alpha}_k^i}1_{\{\tilde{R}_{k,n}^i>\gamma\}}L(\tilde{E}_{k,n}^i) \right)    {\tilde N_l^{S,i}(\tilde\tau_k^i \wedge \tilde\alpha^i) }    }{ \sum_{i=1}^{M}  \left( \sum_{n=1}^{\tilde{\alpha}_k^i}1_{\{\tilde{R}_{k,n}^i>\gamma\}}L(\tilde{E}_{k,n}^i) \right) \left(\sum_{n=1}^{\tilde N_l^{S,i}(\tilde\tau_k^i \wedge \tilde\alpha^i)} \tilde S_{l,n} \right) },& l=k+1,\ldots, K.
	\end{cases}	 
\end{equation}

However, the cross-entropy method is not suitable for the rare event simulation directly, as the objective \eqref{eq:CEISsample} is $0$ with a high probability.
To address the issue, we use the adaptive cross-entropy method to select good IS distributions by adopting a two-stage procedure where both the level $\gamma$ and the IS distributions are iteratively updated \cite{rubinstein2004cross}.
Specifically, each iteration consists of two steps. In the first step, the algorithm samples $N$ regenerative cycles from the IS distribution with rates $\lambda_{l,t-1}$, $\mu_{l,t-1}$ for $l=1,\ldots,K$, and updates the level $\gamma_t$  as the $1-\rho$ sample quantile of the maximum response times within each cycle. 
In the second step, the algorithm solves \eqref{eq:CEISsample} with $\gamma =\gamma_t$ and updates the IS rates $\lambda_{l,t}=\lambda_l'$, $\mu_{l,t}=\mu_l'$ for $l=1,\ldots,K$. The iteration terminates when $\gamma_t$ exceeds the target level $\gamma$. 
More details are given in the Algorithm \ref{alg}.

\subsection{From Tail Probability to SLA}\label{subsec: complete algorithm}
Finally, we explain how to estimate $Q_k(p)$ using the simulated regenerative cycles under IS distribution. Let $G(\gamma)=P(R_{\target,\infty}>\gamma)$ be the true tail probability of the response time.  By definition, 
\begin{equation*}
	Q_k(p)=\inf\{\gamma:P(R_{\target,\infty}\le \gamma)\ge p\}=\inf\{\gamma: G(\gamma)<1-p\}.
\end{equation*}
According to \eqref{eq:tail_estimator}, for any $\gamma\in [0,\gamma_{\max}]$, we can estimate 
$\hat{G}(\gamma)=\hat{P}_{m_1,m_2}(R_{k,\infty}>\gamma).$ Then, a natural estimator for $Q_k(p)$ is 
\begin{equation*}
	\hat{Q}^k_{m_1,m_2}(p)=\inf\left\{\gamma: \hat{P}_{m_1,m_2}(R_{k,\infty}>\gamma)<1-p 
	\right\}.
\end{equation*}
Equation \eqref{eq:quantile_estimator} can be evaluated in the following way. Suppose the $m_1$ regenerative cycles generated by IS contain in total $\tilde{\beta}_{m_1}$ jobs of type $k$. 
We indexed their response time and likelihood ratio by $\tilde{R}_{k,n}$ and $L_{k,n}$ denote the response time of class-$k$ job $n$ and the corresponding likelihood ratio, respectively. Sort $\tilde R_{k,n}$ in ascending order, thereby forming the ordered samples $(\tilde{R}_{k,(1)}, \ldots,\tilde R_{k,(\tilde \beta_{m_1})})$.
Then,
\begin{equation}\label{eq:quantile_estimator}
	\hat{Q}^k_{m_1,m_2}(p)=\tilde R_{k,\left(n_p\right)}, \text{ where }n_p =\min\left\{n: \sum_{i=n}^{\tilde \beta_{m_1}} L_{\target,(i)} \le \frac{(1-p)m_1}{m_2} \sum_{i=1}^{m_2}\alpha^i_k\right\}.
\end{equation}
Now we are ready to present the complete SLA simulation algorithm.\\

\begin{algorithm}[H]\label{alg}
	\caption{SLA Simulation Algorithm}
	\textbf{Input:}
target class $k$, the number of cycles $m_1$ and $m_2$, and  $\gamma_{\max}>Q_k(p)$.
	\\
	\textbf{1. Denominator Estimation.} Generate $m_2$ cycles by direct Monte Carlo. Obtain $\alpha_k^i$ for $i=1,...,m_1$.
	\\
	\textbf{2. Call CE algorithm for Searching IS measure.} 
	Input parameter $N\geq 1$ and $\rho\in(0,1)$. Initialize $\lambda_{l,0}=\lambda_l,\mu_{l,0}=\mu_l$ for $l=1,2,...,K$.\\
	\For{$t=1,\ldots$}{
		\textbf{Adaptive update of $\gamma_{t}$.} Generate $N$ cycles under the distribution with rates $\lambda_{l,t-1}$, $\mu_{l,t-1}$, and compute the sample $(1-\rho)$-quantile ${\gamma}_{t}$, according to
		$\left\{\max_{n=1,\ldots,\alpha_k^i } R_{k,n}:l=1,\ldots,N\right\}.$\\
		Update $\gamma_t = \min\{\gamma_t, \gamma_{\max}\}$.\\
		\textbf{Adaptive update of IS distributions.} 
		Plugin the data from the $N$ cycles into \eqref{eq:lambda} and \eqref{eq:mu} to obtain $\lambda_{l,t}$ and $\mu_{l,t}$.
		\\
		\If{The adaptive level $\gamma_t$ has reached $\gamma_{\max}$, i.e. $\gamma_t=\gamma_{\max}$}{
			Set the IS measure with rates $\tilde\lambda_{l}=\lambda_{l,t}$, $\tilde\mu_{l}=\mu_{l,t}$ for $l=1,\ldots,K$, and	break the loop.
		}
	} 
	\textbf{3. IS for Tail Probability Estimation.} Sample $m_1$ cycles under the switching change of measure with $\tilde{f}^A_l\sim \exp(\tilde{\lambda}_l)$ and $\tilde{f}^S_l\sim\exp(\tilde{\mu}_l)$. For all $\tilde{\beta}_{m_1}$ jobs of class $k$, record their response time $\tilde{R}_{k,n}$ and likelihood ratio $L_{k,n}$ computed according to \eqref{eq: LR}.\\

	\textbf{4. Quantile Estimation.} Compute $\hat{Q}^k_{m_1,m_2}(p)$ according to \eqref{eq:quantile_estimator}.\\
	\textbf{Output:} $\hat{Q}^k_{m_1,m_2}(p)$.
\end{algorithm}
\vspace{2ex}
\begin{remark}
	The assumption on the availability of $\gamma_{\max}$ can actually be relaxed. Given the CLT result in Section \ref{sec: CI}, we can heuristically test whether $\gamma_t$ obtained in each iteration in CE is an upper bound for $Q_k(p)$.  Then, we terminate the CE iteration when the test result is yes for the first time and use $\gamma_t$ as  $\gamma_{\max}$ in the following IS step. In the numerical experiments, we do not assume any knowledge on $\gamma_{\max}$  and apply this heuristic method.
\end{remark}

\section{CONFIDENCE INTERVAL}\label{sec: CI}
Our next step is to construct the confidence interval for the estimator $\hat{Q}^k_{m_1,m_2}(p)$. To do this, we first need to establish the central limit theorem.  
We consider the case where $m_1=m_2=m$. For any given class $k$ and SLA level $p$,  write $\hat{Q}_m(p)=\hat{Q}^k_{m,m}(p)$.
For $\gamma>0$ and $i=1,2,...,m$, define $$\tilde{Y}_{k, i}(\gamma)=\sum_{n=1}^{\tilde{\alpha}_k^i} L_{k,n} 1_{\{\tilde R_{k,n}>\gamma\}},\text{ and }Z_{k, i}(\gamma)=\tilde{Y}_{k, i} - P(R_{k,\infty}>\gamma)\alpha_{k}^i.$$ 
Denote  $\sigma^2(\gamma)=\operatorname{var}(Z_{k,i}(\gamma)) $. Following the approach in \citeN{iglehart1976simulating}, we establish the following CLT result for our SLA estimator $\hat{Q}_m^p$.

\begin{theorem}
	Suppose $m_1=m_2=m$. For any $k$ and $p$,   denoted  $\hat{Q}_m(p)=\hat{Q}^k_{m,m}(p)$.  Let $F$ be the CDF of $R_{k, \infty}$. 
	 Suppose for all $\gamma$ in some neighborhood of $Q(p)$,  $F^{\prime \prime}(\gamma)$ exists, $\left|F^{\prime \prime}(\gamma)\right| \leq M<\infty$ and $E[|Z_{\target,1}(\gamma)|^{2+\varepsilon}]\leq M<\infty$ for some constants $\varepsilon, M>0$ independent of $\gamma$. Then, as $m \rightarrow \infty$,
   \begin{equation*}
   	\frac{\sqrt{m}(\hat{Q}_{m}(p)-Q(p))}{\sigma(Q(p)) /(E[\alpha_{k}] F^{\prime}(Q(p)))} \Rightarrow N(0,1).
   \end{equation*}  
\end{theorem}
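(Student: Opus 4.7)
The plan is to follow the classical Iglehart-style argument that converts a central limit theorem for the empirical tail probability $\hat{G}(\gamma)=\hat{P}_{m,m}(R_{k,\infty}>\gamma)$ into a CLT for its generalized inverse $\hat{Q}_m(p)$.

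First I would establish a pointwise CLT for $\hat{G}(\gamma)$ at a fixed $\gamma$ in a neighborhood of $Q(p)$. Writing $G(\gamma)=P(R_{k,\infty}>\gamma)=1-F(\gamma)$ and using that the numerator and denominator in \eqref{eq:tail_estimator} are independent averages of i.i.d.\ cycles, I would decompose
\begin{equation*}
    \hat{G}(\gamma)-G(\gamma)=\frac{m^{-1}\sum_{i=1}^m\bigl(\tilde{Y}_{k,i}(\gamma)-G(\gamma)\alpha_k^i\bigr)}{m^{-1}\sum_{i=1}^m\alpha_k^i}=\frac{m^{-1}\sum_{i=1}^m Z_{k,i}(\gamma)}{m^{-1}\sum_{i=1}^m\alpha_k^i}.
\end{equation*}
The identity $\tilde{\EE}[\tilde{Y}_{k,i}(\gamma)]=\EE[\sum_{n=1}^{\alpha_k}1_{\{R_{k,n}>\gamma\}}]=G(\gamma)\EE[\alpha_k]$ (renewal reward and the unbiasedness of the likelihood-ratio change of measure, justified by the stopping-time version of the likelihood formula recorded after \eqref{eq: LR}) gives $\EE[Z_{k,1}(\gamma)]=0$. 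The $(2+\varepsilon)$-moment hypothesis together with an ordinary i.i.d.\ Lindeberg CLT and SLLN on $\alpha_k^i$, combined via Slutsky, yields
\begin{equation*}
    \sqrt{m}\bigl(\hat{G}(\gamma)-G(\gamma)\bigr)\Rightarrow N\!\left(0,\frac{\sigma^2(\gamma)}{\EE[\alpha_k]^{\,2}}\right).
\end{equation*}

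Next I would invert. Because $\hat{Q}_m(p)=\inf\{\gamma:\hat{G}(\gamma)<1-p\}$ and $\hat{G}$ is non-increasing, for every $x\in\mathbb{R}$ we have the sandwich
\begin{equation*}
    \{\hat{G}(Q(p)+x/\sqrt{m})<1-p\}\subseteq\{\hat{Q}_m(p)\le Q(p)+x/\sqrt{m}\}\subseteq\{\hat{G}(Q(p)+x/\sqrt{m})\le 1-p\},
\end{equation*}
so it suffices to analyze $\PP(\hat{G}(\gamma_m)\le 1-p)$ at the local alternative $\gamma_m=Q(p)+x/\sqrt{m}$. Using $1-p=G(Q(p))$ and the twice-differentiability of $F$ near $Q(p)$ (with $|F''|\le M$), a Taylor expansion gives
\begin{equation*}
    \sqrt{m}\bigl(1-p-G(\gamma_m)\bigr)=\sqrt{m}\bigl(F(\gamma_m)-F(Q(p))\bigr)=F'(Q(p))\,x+O(1/\sqrt{m}).
\end{equation*}
Therefore the event rewrites as $\{\sqrt{m}(\hat{G}(\gamma_m)-G(\gamma_m))\le F'(Q(p))x+o(1)\}$, and if the pointwise CLT above extends with the local shift $\gamma\mapsto\gamma_m$, the limit is $\Phi\!\bigl(F'(Q(p))x\,\EE[\alpha_k]/\sigma(Q(p))\bigr)$, which is exactly the claimed Gaussian law.

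The main technical obstacle is upgrading the fixed-$\gamma$ CLT to hold at the shifting argument $\gamma_m=Q(p)+x/\sqrt{m}$, i.e.\ showing $\sqrt{m}(\hat{G}(\gamma_m)-G(\gamma_m))\Rightarrow N(0,\sigma^2(Q(p))/\EE[\alpha_k]^2)$. The usual route is to write
\begin{equation*}
    \sqrt{m}\bigl(\hat{G}(\gamma_m)-G(\gamma_m)\bigr)=\sqrt{m}\bigl(\hat{G}(Q(p))-G(Q(p))\bigr)+\sqrt{m}\bigl[(\hat{G}-G)(\gamma_m)-(\hat{G}-G)(Q(p))\bigr],
\end{equation*}
apply the fixed-$\gamma$ CLT to the first term, and prove that the increment term is $o_\PP(1)$ (a stochastic equicontinuity step). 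Continuity of $\gamma\mapsto\sigma^2(\gamma)$ at $Q(p)$ follows from dominated convergence using the uniform $(2+\varepsilon)$ bound; the equicontinuity itself can be argued by a Chebyshev-type bound on $\mathrm{var}(Z_{k,1}(\gamma_m)-Z_{k,1}(Q(p)))$, which vanishes since $1_{\{\tilde R_{k,n}>\gamma_m\}}-1_{\{\tilde R_{k,n}>Q(p)\}}$ is supported on an event of probability tending to $0$ under both measures (controlled through the likelihood ratio $L_{k,n}$ via the uniform moment hypothesis). Once this piece is in place, the sandwich above and Slutsky deliver the theorem.
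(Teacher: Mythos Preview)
Your proposal is correct and follows the same Iglehart-style inversion scheme as the paper: pass from $\{\hat{Q}_m(p)\le a_m\}$ to $\{\hat G(a_m)\le 1-p\}$ at the local alternative $a_m=Q(p)+\text{const}/\sqrt{m}$, Taylor-expand $F$ near $Q(p)$, and reduce to a CLT for $m^{-1/2}\sum_i Z_{k,i}(a_m)$ combined with the SLLN for $m^{-1}\sum_i\alpha_k^i$.

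The one substantive difference is how the CLT at the \emph{moving} argument $a_m$ is obtained. You split it into the fixed-point CLT at $Q(p)$ plus a stochastic-equicontinuity remainder $\sqrt{m}[(\hat G-G)(a_m)-(\hat G-G)(Q(p))]=o_\PP(1)$, which you then control by Chebyshev on $\operatorname{var}(Z_{k,1}(a_m)-Z_{k,1}(Q(p)))$. The paper instead applies a triangular-array Lyapunov CLT directly to the row sums $m^{-1/2}\sum_{i=1}^m Z_{k,i}(a_m)$: the uniform bound $E[|Z_{k,1}(\gamma)|^{2+\varepsilon}]\le M$ in a neighborhood of $Q(p)$ is exactly the Lyapunov condition for this array, and continuity of $\sigma(\cdot)$ (also implied by the uniform moment bound) lets one replace $\sigma(a_m)$ by $\sigma(Q(p))$. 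This is shorter and is the intended use of the hypothesis; your equicontinuity detour works too, but is unnecessary once you recognize that the uniform $(2{+}\varepsilon)$ moment already delivers the triangular-array CLT in one step.
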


\begin{proof}
    Define $\hat{F}_{m}(\cdot)=1-\hat{P}_{m,m}(R_{k, \infty}>\cdot)$. Then, $\hat{F}_m(\gamma)$ is right-continuous and non-decreasing for all $\gamma>0$. Therefore,
    \begin{equation*}
        P\left(\hat{Q}_{m}(p)\le \gamma\right)=P\left(p\le \hat{F}_{m}(\gamma)\right).
    \end{equation*}
Let $$a_{m}(\gamma)=Q(p)+\frac{\gamma \sigma(Q(p))}{\sqrt{m}E\left[\alpha_\target\right] F^{\prime}(Q(p))}.$$ 
Then,
$$
\begin{aligned}
A_{m}(\gamma) & \triangleq P\left(\frac{\sqrt{m}(\hat{Q}_{m}(p)-Q(p))}{\sigma(Q(p)) / (E[\alpha_\target] F^{\prime}(Q(p)))} \leq \gamma\right) \\
&=P\left( \hat{Q}_m(p) \le a_m(\gamma) \right) = P\left( p\leq\hat{F}_m(a_m(\gamma)) \right)=P\left( \hat{P}_m(R_{\target, \infty}>a_m(\gamma)) \le 1-p\right)\\
&=P\left(\sum_{i=1}^{m} Z_{\target, i}(a_m(\gamma)) \le  \sum_{i=1}^{m} \alpha_{\target, i}  \cdot \left(F\left(a_{m}(\gamma)\right)-p\right)\right).
\end{aligned}
$$
Following the regularity conditions on $F(\cdot)$, we can expand $F(\cdot)$ around  $Q(p)$ as
$$
F\left(a_{m}(\gamma)\right)=p+\frac{\gamma \sigma(Q(p))}{ \sqrt{m}E[\alpha_\target] }+O\left(\frac{1}{m}\right).
$$
Then,
$$
A_{m}(\gamma)=P\left(\frac{1}{\sqrt{m} \sigma({Q(p)})} \sum_{i=1}^{m} Z_{\target,i}\left(a_{m}(\gamma)\right) \le \frac{ \gamma}{m E[\alpha_{\target}]}\left( \sum_{i=1}^{m} \alpha_{\target, i}\right)\left(1+O\left(\frac{1}{m}\right)\right)\right)
$$
By strong law of large number, $\left( \sum_{i=1}^{m} \alpha_{\target, i}\right)/(mE[\alpha_\target])\to 1$ almost surely as $m\to\infty$. Therefore,
$$\frac{ \gamma}{m E[\alpha_\target]}\left( \sum_{i=1}^{m} \alpha_{\target, i}\right)\left(1+O\left(\frac{1}{m}\right)\right)\to \gamma, $$
almost surely as $m\to\infty$.
On the other hand, as $E[|Z_{\target,1}(\gamma)|^{2+\varepsilon}]\leq M<\infty$ for all $\gamma$ in the neighborhood of $Q(p)$, the Liapunov condition holds and therefore  by Corollary 9.8.1 of \citeN{resnick2019probability} , as $m \rightarrow \infty$,
$$
\frac{1}{\sqrt[]{m} \sigma{(a_{m}(\gamma))}} \sum_{i=1}^{m} Z_{\target,i}\left(a_{m}(\gamma)\right) \Rightarrow N(0,1). 
$$
Finally, by the continuity of $\sigma(\cdot)$ around $Q(p)$  and the continuous mapping theorem, we have
$$
\frac{1}{\sqrt[]{m} \sigma{(Q(p))}} \sum_{i=1}^{m} Z_{\target,i}\left(a_{m}(\gamma)\right) \Rightarrow N(0,1).
$$
Therefore, we can conclude
\begin{equation*}
    A_m(\gamma)\rightarrow \Phi(\gamma), \quad\text{ as }m\to \infty.
\end{equation*}
where $\Phi$ is the standard normal distribution function.
\end{proof}

\vspace{2ex}
The CLT result indicates that $\hat{Q}_m(p)$ is a consistent estimator for $Q(p)$. In addition, given the CLT result, we can use batching method  \cite{hsieh2002confidence}  to construct confidence intervals. 
Suppose $m = cr$ so that we divide $m$ cycles into $r$ batches of $c$ cycles each. 
Let $\hat{Q}_{c,i}(p)$ denote the sample quantile computed from \eqref{eq:quantile_estimator} using data in the $i$-th batch. Then, the variance is estimated by  $$\hat\sigma^{2}_m = \frac{1}{r-1} \sum_{i=1}^{r}\left(\hat{Q}_{c,i}(p)-\hat{Q}_m(p)\right)^{2},$$
and the corresponding $95\%$ confidence interval is
\begin{equation}\label{eq: CI}
	CI_m=\left[\hat{Q}_m(p)- \frac{1.96\hat{\sigma}_m}{\sqrt{r}},\hat{Q}_m(p)+\frac{1.96\hat{\sigma}_m}{\sqrt{r}}\right].
\end{equation}

\section{NUMERICAL RESULTS}\label{sec: numerical}
In the numerical experiments, we first illustrate the difference in the choice of IS distribution for preemptive and non-preemptive systems with an example of 2 priority queues.  Then, we implement Algorithm \ref{alg} using 2 priority queues, and compare its performance with two benchmark simulation algorithms. Finally, we apply our algorithm to a system with 8 priority queues as used in Huawei CloudEngine 12800 and 12800E to estimate the SLA performance metrics.

\subsection{Non-preemptive versus Preemptive}\label{subsec: non-preemptive v.s. preemptive}
In this part, we illustrate the difference between non-preemptive and preemptive systems using a simple example of 2 priority queues. The system parameter $(\lambda_1,\lambda_2, \mu_1,\mu_2)= (0.2, 0.4, 2, 1)$, i.e. high priority jobs arrive less frequent and are served faster, which is consistent to our observation in real computing system as described in Section \ref{subsec: 8 queues}.  In \citeN{setayeshgar2012large}, the author derives an asymptotic optimal  IS distribution via LDP analysis for a preemptive system, and we refer to this distribution as LDP in the rest of the section. In Table \ref{tab: premeptive vs nonpreemptive} below, we compare the conditional distribution of the system under LDP, and the IS distribution learned by CE procedure as in Algorithm \ref{alg} (referred as CE), and the original distribution, given that a class-1 job experiences a long delay ($>\hat{Q}_1(0.999)\approx6.913$).

\begin{table}[htbp]
	\centering
	\caption{Conditional probabilities of LDP and CE importance distribution compared to that of the original distribution estimated by 1000,000 regenerative cycles. A cycle is said effective if in this cycle there exists at least one class-1 job experiencing long delay.}\label{tab: premeptive vs nonpreemptive}
	\begin{tabular}{|c|c|c|c|}
		\hline
	&\textbf{\# effective}&\multicolumn{2}{|c|}{\textbf{Conditional probability}}\\
	\textbf{Method}&\textbf{cycle}&after a class-1 job& after a class-2 job\\
	\hline
	Naive&514&0.128&0.872\\
	LDP&546652&0.454&0.544\\
	CE&548942&0.107&0.892\\ 
	\hline
	\end{tabular}
\end{table}

In Table \ref{tab: premeptive vs nonpreemptive},   we report the estimated conditional probabilities that a long-delayed job is blocked by a class-1 and that by a class-2 job right before it. According to the results, we can see that the conditional probabilities estimated by data simulated by LDP are quite different from those by the original distribution, while those by CE are close to those by the original distribution. The comparison results explain why IS results derived from preemptive systems can not be directly applied to non-preemptive systems.
\subsection{Performance Test}\label{subsec: algorithm validation}
We  implement Algorithm \ref{alg} with a system with 2 priority queues for performance test. In particular, we compare Algorithm \ref{alg} with two benchmarks:
\begin{itemize}
	\item Naive: regenerative simulation using original distribution.
	\item LDP: regenerative simulation using IS distribution derived for preemptive system via LDP analysis as given in \citeN{setayeshgar2012large}.
\end{itemize}
We consider a 2 priority queue with parameter $\mu_1=\mu_2$ so that the true value of $Q_k(p)$ can be computed via Laplace transformation \cite{kella1985waiting}. Given the true value of $Q_k(p)$, we use mean square error (MSE) and CI coverage rate to measure the performance of a simulation algorithm. For the purpose of efficiency comparison, we fixed the number of regenerative cycles in each simulation round and estimated MSE and CI coverage rate by 100 rounds of simulation. For Algorithm \ref{alg}, we set $N=10,000$ and $\rho =10\%$ for the CE step. The simulation results for 0.999-quantile of the steady-state response times  are reported in Table \ref{tab:2queue 999}. The results show that our algorithm is more efficient than the two benchmarks as it obtains a significantly smaller MSE and a higher CI coverage rate, given the same number of regenerative cycles. 

Compared to Naive and LDP, Algorithm 1 has an extra numerical routine to optimize importance distribution by CE, which will introduce extra computation cost in addition to simulating the regenerative cycles. Although we did not have a  theoretic guarantee on the convergence speed of CE, in the numerical experiments with 2 priority-queues, we find CE obtains good IS parameters in just a few iterations and thus does not add much extra computation time. For example, the IS parameter used by Algorithm \ref{alg} as reported in Table \ref{tab:2queue 999} is obtained by CE in 8 iterations that take 5.94 seconds per iteration.

\begin{table}[htbp]
	\centering
	\caption{Results for 0.999-quantile estimation of steady-state response time in a 2 priority queue with $(\lambda_1,\lambda_2,\mu_1,\mu_2)=(0.1,0.2,1,1)$. Sample mean, MSE and CI coverage rate are estimated based on 100 rounds of simulation.}
	\begin{tabular}{|c|cccccccccc|}
		\hline 
		\textbf{class} &
		\textbf{Method} &
		  \textbf{\#cycle}  &
		  $\bm{\tilde{\lambda}_1}$&$\bm{\tilde{\lambda}_2}$&$\bm{\tilde{\mu}_1}$&$\bm{\tilde{\mu}_2}$&
		  {$\bm{Q(p)}$}&
		  \textbf{\begin{tabular}[c]{@{}c@{}}sample\\ mean\end{tabular}}
		  &\textbf{MSE} &\textbf{\begin{tabular}[c]{@{}c@{}} coverage\\ rate of CI\end{tabular}}
		    \\ \hline
			\multirow{6}{*}{high}&Naive&10,000& -&-&-&-&8.524&8.598&0.535&65\%\\
		&Naive&100,000& -&-&-&-&8.524&8.477&0.046&65\%\\
		\cline{2-11} 
		&LDP&10,000& 0.333&0.500&0.300&0.300&8.524&8.451&0.0394&63\%\\
		&LDP&100,000& 0.333&0.500&0.300&0.300&8.524&8.500&0.0170&31\%\\
			\cline{2-11}
			&Alg 1&10,000&0.330&0.224&0.234&0.238&8.524&8.527&0.0035&91\%\\
			&Alg 1&100,000&0.330&0.224&0.234&0.238&8.524&8.522&0.0005&86\%\\
		\hline
		\multirow{6}{*}{low}&Naive&10,000& -&-&-&-&11.541&11.618&1.560&61\%\\
		&Naive&100,000& -&-&-&-&11.541&11.524&0.127&54\%\\
		\cline{2-11} 
			&LDP&10,000& 0.333&0.500&0.300&0.300&11.541&11.429&0.305&35\%\\
			&LDP&100,000& 0.333&0.500&0.300&0.300&11.541&11.488&0.0414&35\%\\
			\cline{2-11} 
		&Alg 1&10,000&0.219&0.435&0.341&0.339&11.541&11.496&0.0227&78\%\\
		&Alg 1&100,000&0.219&0.435&0.341&0.339&11.541&11.542&0.0004&71\%\\
		\hline	
	\end{tabular}
	\label{tab:2queue 999}
\end{table}

We also test the simulation algorithms for estimating more extreme quantiles and the results are reported in Table \ref{tab:2queue extreme}. We find the difference in the performance measures between Algorithm \ref{alg} and the benchmarks becomes more significant as $p$ gets closer to 1.
\begin{table}[htbp]
	\centering
	\caption{Results for quantile estimation of steady-state response time in a 2 priority queue with $(\lambda_1,\lambda_2,\mu_1,\mu_2)=(0.1,0.2,1,1)$ with $p_1=1-10^{-5}$ and $p_2=1-10^{-10}$. Sample mean, MSE and CI coverage rate are estimated based on 100 rounds of simulation with 100,000 regenerative cycles in each round. }
	\label{tab:2queue extreme}
	\begin{tabular}{|c|c|cccc|cccc|}
	\hline
	  \textbf{\begin{tabular}[c]{@{}c@{}}class\end{tabular}} &
	  \textbf{Method} &
	  \bm{$Q(p_1)$} &
	  \textbf{\begin{tabular}[c]{@{}c@{}}sample \\ mean\end{tabular}} &
	  \textbf{MSE} &
	  \textbf{\begin{tabular}[c]{@{}c@{}}coverage  \\ rate of CI\end{tabular}} &
	  \bm{$Q(p_2)$} &
	  \textbf{\begin{tabular}[c]{@{}c@{}}sample \\ mean\end{tabular}} &
	  \textbf{MSE} &
	  \textbf{\begin{tabular}[c]{@{}c@{}}coverage  \\ rate of CI\end{tabular}} \\ \hline
	\multirow{3}{*}{high} & Naive & 13.809 & 13.177 & 1.8733 & 8\%  & 26.753 &  -    &  -    &  -   \\
                      & LDP   & 13.809 & 13.778 & 0.0153 & 65\% & 26.753 & 26.591 & 0.1331 & 47\% \\
                      & Alg 1 & 13.809 & 13.804 & 0.0008 & 82\% & 26.753 & 26.739 & 0.0035 & 71\% \\ \hline
\multirow{3}{*}{low}  & Naive & 20.637 & 20.251 & 5.5800 & 26\% & 44.211 &  -   &   -   &  -  \\
                      & LDP   & 20.637 & 20.411 & 0.4468 & 26\% & 44.211 & 42.430 & 6.0641 & 4\%  \\
                      & Alg 1 & 20.637 & 20.617 & 0.0166 & 54\% & 44.211 & 44.208 & 0.0270 & 72\% \\ \hline
	\end{tabular}

	\end{table}

\subsection{Application to SLA Estimation}\label{subsec: 8 queues}
We consider a priority queueing model used in Huawei CloudEngine as illustrated by Figure \ref{fig: 8 queues}.
\begin{figure}[ht]
	\begin{center}
			\includegraphics[scale=0.5]{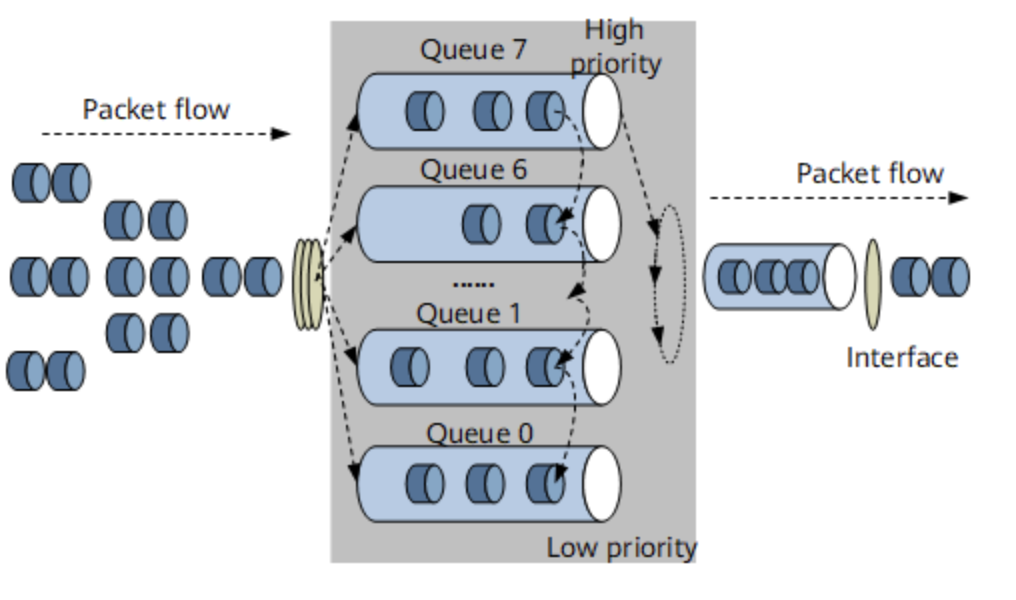}
	\end{center}
	\caption{Illustration of a network device with Priority Queueing scheduling.}\label{fig: 8 queues}
\end{figure}
 
 There are eight queues with indexes ranging from 0 to 7 on each interface in the outbound direction of a switch. The priorities of queues 7 to 0 are in the descending order defined by the Internet Engineering Task Force (IETF), i.e., class selector 7 (CS7), CS6, expedited forwarding (EF), assured forwarding 4 (AF4), AF3, AF2, AF1, and best effort (BE), respectively. Current Internet practice suggests CS7 and CS6 for the network control and signaling, EF for telephony, AF4 to AF1 for multimedia streaming, and BE for flows without bandwidth assurance such as email and telnet services. According to the flow characteristics, the model parameters are set as in Table \ref{tab: 8 queue parameters}. We then apply Algorithm \ref{alg} with number of regenerative cycles $m_1=m_2=100,000$, $N=10,000$ and $\rho = 0.1$ for the CE step. The simulation results and running times are reported in Table \ref{tab:8class 39} and Table \ref{tab:8class 59}. 
\begin{table}[ht]
	\caption{Model parameters  of 8 priority queues.}\label{tab: 8 queue parameters}
	\begin{center}
			\begin{tabular}{|c|cccccccc|}
			\hline
			Queue index&7&6&5&4&3&2&1&0\\
			\hline
			Service class&CS7&CS6&EF&AF4&AF3&AF2&AF1&BE\\
			\hline
			\text{mean packet size (byte)}&100&100&200&1400&1400&1400&1400&1400\\
			\text{mean arrival rate (Mbps)}&10&10&10&400&200&100&100&450\\
			\hline
		\end{tabular}
	\end{center}

\end{table}

Our goal is to estimate $Q_k(p)$ for $k=0,1,...,7$ and $p=0.999$ and $0.99999$. To control the estimation accuracy, we use a pre-simulation to estimate the CLT variance via batching and then choose the number of regenerative cycles $m$  so that the relative error (defined as $\hat{\sigma}_m/\hat{Q}_m(p)$) is less than 0.1\%.  In this case, as $\mu_k$ are unequal, the true value of $Q_k(p)$ is not available. So to verify the simulation estimation $\hat{Q}_k(p)$, we run independent IS simulation using 100,000 regenerative cycles to estimate $P(R_{k,\infty}>\hat{Q}_k(p))$. If our estimation is accurate, the estimated probability should be close to $p$.  The simulation is implemented in Python and runs on a 16-cores server with Intel Cascade Lake 3.0GHz CPU. The simulation results and computation times (including pre-simulation and all steps in Algorithm \ref{alg}) are reported in Table \ref{tab:8class 39} and Table \ref{tab:8class 59} below.  It is notable that when the tail probability $1-p$ shrinks by 100 times, the total computation time  of our method only doubles from $12327$s to $26875$s to achieve the same relative precision level.

\begin{table}[ht]
	\centering
	\caption{ Simulation results and running time  for 0.999-quantile estimation of steady-state response time of 8 priority classes. The unit of response time is microsecond ($\mu$s).}
	\begin{tabular}{|c|ccc|cc|c|}
	\hline
	\textbf{class} &
	\textbf{\begin{tabular}[c]{@{}c@{}}sample\\  quantile\end{tabular}} &
	\textbf{CI} &
	\textbf{\#cycle} &
	\textbf{probability} &
	\textbf{CI} &
	\textbf{\begin{tabular}[c]{@{}c@{}}running\\ time (s)\end{tabular}} \\ \hline
	7 & 22.855 & (22.811, 22.900) & 2836384 & 0.001028 & (0.000946, 0.001111) & 1409 \\
	6 & 22.880 & (22.835, 22.925) & 3074112 & 0.001020 & (0.000949, 0.001091) & 1738 \\
	5 & 23.260 & (23.215, 23.305) & 3060256 & 0.001041 & (0.000923, 0.001159) & 1774 \\
	4 & 33.970 & (33.906, 34.034) & 817056  & 0.001003 & (0.000962, 0.001044) & 470  \\
	3 & 45.013 & (44.926, 45.099) & 1987216 & 0.000962 & (0.000911, 0.001014) & 1101 \\
	2 & 53.259 & (53.155, 53.364) & 3847392 & 0.001059 & (0.000983, 0.001135) & 2434 \\
	1 & 59.139 & (59.028, 59.251) & 4140432 & 0.001018 & (0.000943, 0.001093) & 2592 \\
	0 & 74.354 & (74.208, 74.500) & 1604864 & 0.001006 & (0.000964, 0.001048) & 809  \\ \hline
	\end{tabular}
	\label{tab:8class 39}
	\end{table}

\begin{table}[ht]
	\centering
	\caption{ Simulation results and running time   for 0.99999-quantile estimation of steady-state response time of 8 priority classes. The unit of response time is microsecond ($\mu$s).}
	\begin{tabular}{|c|ccc|cc|c|}
		\hline
		\textbf{class} &
		\textbf{\begin{tabular}[c]{@{}c@{}}sample\\  quantile\end{tabular}} &
		\textbf{CI} &
		\textbf{\#cycle} &
		\textbf{probability} &
		\textbf{CI} &
		\textbf{\begin{tabular}[c]{@{}c@{}}running\\ time (s)\end{tabular}} \\ \hline
	7 & 39.818  & (39.748, 39.888)   & 4155440 & 0.0000093 & (8.554e-06,   1.012e-05) & 3321 \\
	6 & 39.664  & (39.587, 39.742)   & 3614400 & 0.0000101 & (9.194e-06, 1.107e-05)   & 2801 \\
	5 & 40.480  & (40.400, 40.559)   & 4278560 & 0.0000091 & (8.142e-06, 1.001e-05)   & 3054 \\
	4 & 54.354  & (54.247, 54.460)   & 268624  & 0.0000110 & (9.499e-06, 1.247e-05)   & 268  \\
	3 & 81.637  & (81.478, 81.797)   & 1950080 & 0.0000096 & (8.794e-06, 1.036e-05)   & 1717 \\
	2 & 100.922 & (100.725, 101.120) & 5303456 & 0.0000089 & (7.536e-06, 1.023e-05)   & 7127 \\
	1 & 113.862 & (113.639, 114.085) & 6569632 & 0.0000099 & (7.976e-06, 1.191e-05)   & 7758 \\
	0 & 139.734 & (139.460, 140.008  & 1217664 & 0.0000106 & (9.981e-06, 1.130e-05)   & 828  \\ \hline
	\end{tabular}
	\label{tab:8class 59}
	\end{table}

\section{CONCLUSION}\label{sec:}

In this article, we proposed a new simulation algorithm to estimate tail quantiles of the steady-state sojourn time in non-preemptive priority queues. Our algorithm is designed based on regenerative simulation, and importance sampling is used to improve efficiency. The importance distribution is optimized by the cross-entropy method. The confidence interval is also constructed. The numerical experiments show that our algorithm obtains significant improvement compared to the benchmarks. Future directions of interest include extending the current framework to queues with other service protocols such as WFQ and DRR, commonly used in computing and telecommunication systems.

\appendix

\footnotesize

\printbibliography

\end{document}